\newtheorem{Theorem}{Theorem}[section]
\newtheorem{lemma}[Theorem]{Lemma}
\theoremstyle{definition}
\providecommand{\keywords}[1]
{
  \small	
  \textit{Keywords:} #1
}
\title{Diffusion Approximation for Transport Equations with Dissipative Drifts}
\author{First Author Name$^{a}$$^{*}$, Second Author Name$^{b}$$^{c}$, etc.$^{a}$$^{c}$ \\
        \small $^{a}$Department, University, City, Country \\
        \small $^{b}$Department, University, City, Country \\
        \small $^{c}$Department, University, City, Country \\\\
        \small $^{*}$Corresponding author: first name, initials, surname; \tt{email.address}
}
\author{
Luca Di Persio \footnote{College of Mathematics, 
Department of Computer Science, University of Verona, strada le Grazie 15 - 37134 Verona - Italy, luca.dipersio@univr.it}
  \and  Yuri Kondratiev \footnote{Department of Mathematics, Bielefeld University, 33615 Bielefeld - Germany, kondrat@math.uni-bielefeld.de}
  \and Viktorya Vardanyan \footnote{Department of Mathematics, University of Trento, Via Sommarive 14-38123 Povo(TN)-Italy, viktorya.vardanyan@unitn.it}
}
\date{} 
\begin{document}

\maketitle

\begin{abstract} 
We study stochastic differential equations(SDEs) with a small perturbation parameter. Under the dissipative condition on the drift coefficient and the local Lipschitz condition on the drift and diffusion coefficients we prove the existence and uniqueness result for the perturbed SDE, also the convergence result for the solution of the  perturbed system to the solution of the unperturbed system when the perturbation parameter approaches zero.We consider the application of the above-mentioned results to the Cauchy problem and the transport equations.

\end{abstract} \hspace{10pt}

\keywords{diffusion process, dissipative drift, local Lipschitz condition, perturbation parameter, Cauchy problem, transport equation}

\section{Introduction}
We consider Markov processes  $X_t^\epsilon$
which arise from small random perturbations of
dynamical systems, imposing specific  conditions on the  coefficients of the diffusion process, i.e., the dissipativity and dissipativity for differences for the drift and the local Lipschitz condition for all coefficients. These kind of processes arise in
different areas of natural sciences. The  concept of dissipativity comes, in particular, from physics.
Dissipative systems are systems which absorb more energy from the external world than they supply and such systems are contrasted with energy conserving systems like
Hamiltonian systems.The dissipativity of dynamical systems as it is known in modern system and control community was introduced by Willems in \cite{JW}.	\\
Freidlin and Wentzell in their  book  \cite{FW} have developed the theory for random perturbations assuming that the coefficients satisfy Lipschitz condition and have a linear growth bound.
They study the random perturbations by direct probabilistic methods and then deduce consequences concerning the corresponding
problems for partial differential equations. 
They consider mainly schemes of random perturbations of the form
\begin{equation}
\label{diffusion1}
\dot{X_t^\epsilon} = b(X_t^\epsilon,\epsilon \xi_t),\ X_0^\epsilon=x
\end{equation}

where $\xi_t(\omega)$, t$\geq$0 is a random process on a probability space with values in $\mathbb{R}^l$, its trajectories are right continuous, bounded and have at most a finite number of points of discontinuity on every interval $[0,T], T< \infty $.
At the points of discontinuity of $\xi_t$, where as a rule, \eqref{diffusion1} can not be satisfied, is imposed the requirement of continuity of $X_t^\epsilon$.
Additionally $\epsilon$  is a small numerical number and $b(x,y)=(b^1(x,y),...,b^r(x,y)), x \in R^r, y \in R^l$ is a vector field assumed to be jointly continuous in its variables.
Let $b(x,0)=b(x)$,  the random process $X_t^\epsilon$ is considered as a result of small perturbations of the system
\begin{equation}
\label{zeroth}
	\dot{x_t}=b(x_t),\ x_0=x
\end{equation}

The following equation

\begin{equation}
\dot{X_t^\epsilon}= b(X_t^\epsilon) + \epsilon \sigma(X_t^\epsilon) \dot{w_t}, \ X_0^\epsilon =x
\end{equation}

can be considered as a special case of \eqref{diffusion1} with $b(x,y)=b(x)+ \sigma(x) y$. Here y is substituted by white noise process.\\
The precise meaning of (\ref{diffusion2}) can be formulated in the language of stochastic integrals in the following way:
\begin{equation}
\label{integral form}
    X_t^\epsilon=x+\int_0^t b(X_s^\epsilon)ds+\epsilon \int_0^t\sigma(X_s^\epsilon) dw_s
\end{equation}
Every solution of (\ref{integral form}) is a Markov process (a diffusion process with drift vector $b(x)$ and diffusion matrix $\epsilon^2 \sigma(x)\sigma^*(x)$).\\
Freidlin and Wentzell in their  book \cite{FW} show that $X^\epsilon_t$ converges to the solution $x_t$ of the unperturbed system as $\epsilon \to 0$, moreover they discuss the application of this result to related  partial differential equations. 
Particularly, they  obtain results concerning the behaviour of solutions of boundary value problems as $\epsilon \to 0$ from the behaviour of $X^\epsilon_t(w)$ as $\epsilon \to 0$. In the theory of differential equations of parabolic type, much attention is devoted to the study of the behaviour, as $\epsilon \to 0$, for solutions of boundary value problems for equations of the form
$$  \frac{\partial v^\epsilon}{\partial t}= L^\epsilon v^\epsilon + c(x)v^\epsilon+g(x). $$
Here $L^\epsilon$ is a differential operator with a small parameter at the derivatives of highest order:
$$ L^\epsilon= \frac{\epsilon^2}{2} \sum_{i,j=1}^{r} a^{ij}(x) \frac{\partial^2}{\partial x^i x^j}+ \sum_{i=1}^{r} b^i(x) \frac{\partial}{\partial x^i}.$$
Every  operator $L^\epsilon$ (whose coefficients are assumed to be sufficiently regular) has an associated  diffusion process $X_t^{\epsilon,x}$.
This diffusion process can be given by means of the stochastic equation
\begin{equation}
\label{(3.1)}
\dot{X}_t^{\epsilon,x}=b(X_t^{\epsilon,x})+ \epsilon \sigma(X_t^{\epsilon,x})\dot{\omega}_t, \ X_0^{\epsilon,x}=x,
\end{equation}
where $\sigma(x)\sigma^{*}(x)=(a^{ij}(x)),\ b(x)=(b^1(x),...,b^r(x)).$\\
In particular , they consider the Cauchy problem:
\begin{equation}
\label{(3.2)}
\frac{\partial v^\epsilon(t,x)}{\partial t} = L^\epsilon v^\epsilon(t,x) + c(x) v^\epsilon (t,x) +g(x), \ v^\epsilon(0,x)=f(x),
\end{equation}
$t>0, \ x \in \mathbb{R}^r$ for $\epsilon>0$ and together with it the problem for the first-order operator which is obtained for $\epsilon=0$:
\begin{equation}
\label{(3.3)}
\frac{\partial v^0(t,x)}{\partial t} = L^0 v^0(t,x) + c(x) v^0 (t,x) +g(x), \ v^0(0,x)=f(x)
\end{equation}
A special case of Cauchy equation is the so called transport equation:
$$\frac{\partial v^\epsilon(t,x)}{\partial t} = L^\epsilon v^\epsilon(t,x), \ v^\epsilon(0,x)=f(x),$$
which equals \eqref{(3.2)} in the case that g $\equiv$0 and c $\equiv$0.

\section{The model}
Let $(\Omega, \mathcal{F},(\mathcal{F}_t)_{t\in [0,T]}, \mathbb{P})$ be the reference filtered probability space  and $w$ is a given l-dimensional standard Brownian motion adapted to the defined filtration ($\mathcal{F}_t)_{t \in [0,T]}$, $0<T<+\infty$ being a finite horizon time. Here $\Omega$ is a nonempty set, which is interpreted as the space of elementary events.
The second object, $\mathcal{F}$, is a $\sigma$-algebra of subsets of  $\Omega$. Finally, $\mathbb{P}$ is a probability
measure on the $\sigma$-algebra $\mathcal{F}$. \\
We consider
\begin{equation}
\label{zeroth1}
	\dot{x_t}=b(x_t),\ x_0=x
\end{equation}
and the perturbed stochastic differential equation
\begin{equation}
    \label{diffusion2}
    \dot{X_t^\epsilon}= b(X_t^\epsilon) + \epsilon \sigma(X_t^\epsilon) \dot{w_t}, \ X_0^\epsilon =x
\end{equation}
in $\mathbb{R}^r$.Where $\epsilon$ is a small numerical number, $b(x)=(b^1(x),...,b^r(x))$ is a vector field in $\mathbb{R}^r$ and $\sigma(x)=(\sigma_j^i(x))$ is a matrix having $l$ columns and $r$ rows. By a solution of this equation we understand a random process $X_t = X_t(w)$ which satisfies the relation
\[
 X_t^\epsilon=x+\int_0^t b(X_s^\epsilon)ds+\epsilon \int_0^t\sigma(X_s^\epsilon) dw_s,
\]
with probability 1 for every $t \in [0,T]$.
We usually assume that the coefficients of our diffusion  fulfil a Lipschitz condition and have a linear growth bound. Under those conditions is proved that the solution to the stochastic differential equation exists and is unique. We modify the conditions on coefficients and prove  that the existence and uniqueness result  for the solution still holds (the proof of this result is based on the book by Gihman and Skorohod \cite{GS}) in one-dimensional case.  For multi-dimensional situation it was analysed in classical book by Stroock and Varadhan  \cite{SV}. 
We 	will assume that  $\sigma$ increases no faster than linearly and $b$ satisfies dissipativity, the coefficients of \eqref{diffusion2} satisfy a local Lipschitz condition:
for some K
\[		
<y,b(y)> + \sum_{i,j} [\sigma_j^i(y)]^2 \leq K^2(1+ |y|^2);
\]
for each N there exists an $L_N$ for which
\[
\sum_{i}|b^i(y)-b^i(z)| + \sum_{i,j} |\sigma_j^i(y)-\sigma_j^i(z)| \leq L_N|y-z|
\]	
with $|y| \leq N$, $|z| \leq N$.\\

After proving the existence and uniqueness result, we will show that the zeroth approximation for the process \eqref{diffusion2} with dissipative drift and locally Lipschitz coefficients holds, i.e  the solution  of \eqref{diffusion2}  $X_t^\epsilon$ converges to the solution of \eqref{zeroth1} $x_t$ as $\epsilon \to 0$ . The last approximation will be used to show that the solution to the Cauchy problem for $\epsilon>0$ converges to the solution for $\epsilon=0$ with weaker conditions, this convergence holds  also for Transport equation.\\
Before  stating and proving the main results, we would like to state a Gronwall-Lemma which is often used in the proofs.
\begin{lemma} \label{Gronwall-Lemma} [Gronwall] 
	Let $m(t), t \in [0,T]$, be a nonnegative function satisfying the relation \begin{equation}
	\label{Gronwall}
	m(t) \leq C + \alpha \int_{0}^{t}m(s)ds, \ t\in [0,T]
	\end{equation}
	with $C,\alpha \geq 0$. \\
	Then $m(t) \leq C e^{\alpha t},$ for $ t \in [0,T]$.
\end{lemma}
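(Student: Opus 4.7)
The plan is to reduce the integral inequality to a differential inequality by studying the right-hand side as a function of $t$. Specifically, I would define
\[
M(t) := C + \alpha \int_0^t m(s)\,ds, \qquad t \in [0,T],
\]
so that by hypothesis $m(t) \le M(t)$ pointwise on $[0,T]$, with $M(0) = C$. Since $m$ is nonnegative and (implicitly) locally integrable on $[0,T]$, the function $M$ is absolutely continuous with $M'(t) = \alpha\, m(t)$ almost everywhere, and the pointwise bound $m(t) \le M(t)$ immediately yields the differential inequality $M'(t) \le \alpha\, M(t)$ a.e.

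From here the standard integrating-factor trick finishes the argument. I would set $\phi(t) := M(t) e^{-\alpha t}$ and compute
\[
\phi'(t) = \bigl[M'(t) - \alpha M(t)\bigr] e^{-\alpha t} \le 0 \quad \text{a.e.}
\]
Hence $\phi$ is non-increasing, which gives $\phi(t) \le \phi(0) = C$, i.e.\ $M(t) \le C e^{\alpha t}$. Combining with $m(t) \le M(t)$ yields the claim $m(t) \le C e^{\alpha t}$ for every $t \in [0,T]$.

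There is essentially no genuine obstacle here; the only mild subtlety is to justify the differentiation of $M$ without imposing extra regularity on $m$. Should the reader prefer a purely integral argument, an alternative route is to iterate the inequality: substituting the bound for $m$ into itself $n$ times produces
\[
m(t) \le C \sum_{k=0}^{n-1} \frac{(\alpha t)^k}{k!} + \alpha^n \int_0^t \frac{(t-s)^{n-1}}{(n-1)!} m(s)\, ds,
\]
and then letting $n \to \infty$ the remainder vanishes (since $\int_0^T m < \infty$ by a first use of the hypothesis with $n=1$), recovering the exponential bound from the Taylor series of $e^{\alpha t}$. Either route is sufficient; I would present the integrating-factor proof for brevity.
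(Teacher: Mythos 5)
Your proof is correct. The paper in fact states this Gronwall lemma without giving any proof at all, so there is no argument of the authors' to compare against; your integrating-factor derivation is the standard one and is complete. The only point of care --- that $m$ must be locally integrable for $M(t) = C + \alpha\int_0^t m(s)\,ds$ to be absolutely continuous with $M'(t) = \alpha m(t)$ a.e. --- is, as you note, already implicit in the hypothesis, since otherwise the inequality in the statement would not make sense. The iterated-kernel alternative you sketch is also valid and has the minor advantage of avoiding any differentiation, but either version suffices for the uses made of the lemma in the paper (where $m(t)$ is an expectation such as $1 + M|X_t^\epsilon|^2$).
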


\section{Main results}
\subsection{Existence and Uniqueness of a Solution}
We aim to show that under the weaker conditions on the coefficients, i.e. dissipativity for the drift and the local Lipschitz condition for all the coefficients,  \eqref{diffusion2} has a solution and the solution is unique.
This fact may be find in \cite{SV} but we will include the proof because certain steps in this proof we will use later.
To prove the existence and uniqueness result we will need the following theorem:
\begin{Theorem}
	\label{Theorem 2 GS}
	Assume that the coefficients $b_1(x)$, $b_2(x)$, $\sigma_1(x)$, $\sigma_2(x)$ of the equations
	\begin{equation}
	\label{(4)}
	\dot{X}_{t,i}^\epsilon =b_i(X_{t,i}^\epsilon) + \epsilon \sigma_i(X_{t,i}^\epsilon)\dot{\omega}_t, \ i=1,2
	\end{equation}
satisfy Lipschitz condition and linear growth condition, i.e.,
there exists a constant K such that for $t \in [0,T]$, $x,y \in \mathbb{R}^r$, $i=1,2$
\[
|b_i(x)-b_i(y)|+|\sigma_i(x)-\sigma_i(y)|\leq K|x-y|,
\]
\[
|b_i(x)|^2+|\sigma_i(x)|^2 \leq K^2(1+|x|^2)
\]
and that for some $N>0$ with $|x^j|\leq N$ for all $j \in \mathbb{N}_0: 0 \leq j \leq r$,
$b_1(x)=b_2(x)$ and $\sigma_1(x)=\sigma_2(x)$.\\
If $X_{t,1}^\epsilon$ and $X_{t,2}^\epsilon$ are solutions of \eqref{(4)} with the same initial condition $X_{0,1}^\epsilon= X_{0,2}^\epsilon=x$, $M[x^2] < \infty$, and $\tau_i$ is the largest t for which 
$\sup_{0 \leq s \leq t, 0 \leq j \leq r } |X_{t,i}^{\epsilon,j}|\leq N$, then $P \lbrace \tau_1=\tau_2 \rbrace=1$ and $$P \lbrace \sup_{0 \leq s \leq \tau_1} |X_{t,1}^\epsilon-X_{t,2}^\epsilon|=0 \rbrace =1.$$
\end{Theorem}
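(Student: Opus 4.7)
The plan is to stop both solutions at the exit time $\tau_1$, subtract their integral equations, exploit the fact that the two sets of coefficients agree at every point of the cube $\{\sup_j|x^j|\leq N\}$ visited by $X_{\cdot,1}^\epsilon$ to replace $(b_1,\sigma_1)$ by $(b_2,\sigma_2)$ along that path, and close the resulting bound by the Lipschitz condition and Gronwall's Lemma \ref{Gronwall-Lemma}. The equality of the two stopping times then comes for free by running the same argument a second time with the roles of the indices swapped.

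For the pathwise identity, set $\Delta_t := X_{t\wedge\tau_1,1}^\epsilon - X_{t\wedge\tau_1,2}^\epsilon$. Since $X_{s,1}^\epsilon$ lies in the cube for $s\leq\tau_1$ and $b_1=b_2$, $\sigma_1=\sigma_2$ there, subtracting the two integral equations gives
\[
\Delta_t=\int_0^{t\wedge\tau_1}\!\bigl[b_2(X_{s,1}^\epsilon)-b_2(X_{s,2}^\epsilon)\bigr]ds+\epsilon\!\int_0^{t\wedge\tau_1}\!\bigl[\sigma_2(X_{s,1}^\epsilon)-\sigma_2(X_{s,2}^\epsilon)\bigr]dw_s.
\]
A standard application of Cauchy--Schwarz to the drift and of Doob/It\^o isometry to the stochastic integral, followed by the global Lipschitz bound for $b_2$ and $\sigma_2$, produces an inequality of the form $\varphi(t)\le C\int_0^t\varphi(s)\,ds$ with $\varphi(t):=E\sup_{u\le t}|\Delta_u|^2$, and Lemma \ref{Gronwall-Lemma} then forces $\varphi\equiv 0$. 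Hence $X_{s,1}^\epsilon=X_{s,2}^\epsilon$ for every $s\in[0,\tau_1]$ almost surely.

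This identity immediately gives $\tau_2\ge\tau_1$ a.s., because $\sup_{s\le\tau_1}\sup_j|X_{s,2}^{\epsilon,j}|=\sup_{s\le\tau_1}\sup_j|X_{s,1}^{\epsilon,j}|\le N$. Interchanging the indices $1$ and $2$ throughout the whole argument above and stopping instead at $\tau_2$ yields the symmetric inequality $\tau_1\ge\tau_2$, hence $\tau_1=\tau_2$ a.s. Combined with the already established pathwise identity this gives both conclusions of the theorem.

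The main obstacle will be carrying out the stopped-integral estimate cleanly: one must rewrite $\int_0^{t\wedge\tau_1}(\cdots)\,dw_s$ as $\int_0^t\mathbf{1}_{\{s\le\tau_1\}}(\cdots)\,dw_s$, using that $\tau_1$ is an $(\mathcal{F}_t)$-stopping time so that the truncated integrand remains progressively measurable, and then apply It\^o's isometry and Doob's maximal inequality on the full interval $[0,t]$. Once that step is in place, the global Lipschitz bound for $(b_2,\sigma_2)$ means no further in-integrand localization is required, and the symmetric upgrade to $\tau_1=\tau_2$ is essentially free.
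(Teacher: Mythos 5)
Your proposal is correct and follows essentially the same route as the paper: localize to the event that $X_{\cdot,1}^\epsilon$ stays in the cube (you via stopping at $t\wedge\tau_1$, the paper via multiplying by the indicator $\gamma_1(t)$), use the coincidence of the coefficients there to reduce to a single pair $(b_2,\sigma_2)$, close with Cauchy--Schwarz, the It\^o isometry, the Lipschitz bound and Gronwall's Lemma, and then swap the indices to upgrade $\tau_2\ge\tau_1$ to $\tau_1=\tau_2$. The only differences are cosmetic (stopped processes versus indicator weights, and Doob's maximal inequality versus a pointwise Gronwall estimate followed by continuity).
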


\begin{proof}
Define $\gamma_1(t):= 1$, if $\sup_{0 \leq s \leq t, 0 \leq j \leq r} |X_{s,1}^{\epsilon,j}|\leq N,$ and $\gamma_1(t):=0$, \\if $\sup_{0 \leq s \leq t, 0 \leq j \leq r} |X_{s,1}^{\epsilon,j}|> N.$
Then we get
\begin{align*}
\gamma_1(t) \sum_{j} [X_{t,1}^{\epsilon,j}-X_{t,2}^{\epsilon,j}] 
&= \gamma_1(t) \int_{0}^{t} [\sum_{j} b_1^j(X_{s,1}^\epsilon)-b_2^j(X_{s,2}^\epsilon) ] ds \\
&+ \gamma_1(t) \epsilon \int_{0}^{t} [\sum_{i,j} \sigma_{i,1}^j(X_{s,1}^\epsilon)-\sigma_{i,2}^j(X_{s,2}^\epsilon) ] d\omega_s \\
&= \gamma_1(t) \int_{0}^{t} [\sum_{j} b_1^j(X_{s,1}^\epsilon)-b_2^j(X_{s,1}^\epsilon) ] ds \\
&+ \gamma_1(t) \epsilon \int_{0}^{t} [\sum_{i,j} \sigma_{i,1}^j(X_{s,1}^\epsilon)-\sigma_{i,2}^j(X_{s,1}^\epsilon) ] d\omega_s \\
&+ \gamma_1(t) \int_{0}^{t} [\sum_{j} b_2^j(X_{s,1}^\epsilon)-b_2^j(X_{s,2}^\epsilon) ] ds\\
&+ \gamma_1(t) \epsilon \int_{0}^{t} [\sum_{i,j} \sigma_{i,2}^j(X_{s,1}^\epsilon)-\sigma_{i,2}^j(X_{s,2}^\epsilon) ] d\omega_s.\\
&= \gamma_1(t) \int_{0}^{t} [\sum_{j} b_2^j(X_{s,1}^\epsilon)-b_2^j(X_{s,2}^\epsilon) ] ds\\
&+ \gamma_1(t) \epsilon \int_{0}^{t} [\sum_{i,j} \sigma_{i,2}^j(X_{s,1}^\epsilon)-\sigma_{i,2}^j(X_{s,2}^\epsilon) ] d\omega_s.
\end{align*}
Where the last step is possible, because from $\gamma_1(t)=1$ follows $b_1^j(X_{s,1}^\epsilon)= b_2^j(X_{s,1}^\epsilon)$ and $\sigma_{i,1}^j(X_{s,1}^\epsilon) =\sigma_{i,2}^j(X_{s,1}^\epsilon)$ for $s \leq t$.
Thus 
\begin{align*}
\gamma_1(t)[\sum_{j} X_{t,1}^{\epsilon,j}- X_{t,2}^{\epsilon,j} ]^2 
& \leq 2 \gamma_1(t) \Big [\int_{0}^{t} \sum_{j} [b_2^j(X_{s,1}^\epsilon)-b_2^j(X_{s,2}^\epsilon)]ds \Big ]^2  \\
&+ 2 \gamma_1(t) \epsilon^2 \Big [\int_{0}^{t} \sum_{j} [\sigma_{i,2}^j(X_{s,1}^\epsilon)-\sigma_{i,2}^j(X_{s,2}^\epsilon)]dw_s \Big ]^2.
\end{align*}
Taking into account that $\gamma_1(t)=1$ implies $\gamma_1(s)=1$ for $s \leq t$ we can write the $\gamma_1(s)$'s inside the brackets. Taking the expectation and then using the Lipschitz condition and Cauchy-Schwarz-inequality, we can show that for $\epsilon \leq 1$ there exists a constant L for which
\begin{align*}
M \Big [ \gamma_1(t)[\sum_{j} X_{t,1}^{\epsilon,j}- X_{t,2}^{\epsilon,j} ]^2 \Big ]
& \leq M \Big [4 [\int_{0}^{t} \gamma_1(s) K |X_{s,1}^\epsilon-X_{s,2}^\epsilon|ds]^2 \Big ] \\
& \leq 4 K^2 t \int_{0}^{t} M [\gamma_1(s)[\sum_{j} X_{s,1}^{\epsilon,j}- X_{s,2}^{\epsilon,j} ]^2]ds\\
& = L \int_{0}^{t} M [\gamma_1(s)[\sum_{j} X_{s,1}^{\epsilon,j}- X_{s,2}^{\epsilon,j} ]^2]ds.
\end{align*}

Now we can use Lemma \ref{Gronwall-Lemma}, for which C=0. It follows that 
$$M \Big [ \gamma_1(t)[\sum_{j} X_{t,1}^{\epsilon,j}- X_{t,2}^{\epsilon,j} ]^2 \Big ]=0.$$

Considering the continuity of $X_{t,1}^\epsilon$ and $X_{t,2}^\epsilon$ we can establish
$$ P \lbrace \sup_{0 \leq t \leq T} \gamma_1(t) [\sum_{j} X_{t,1}^{\epsilon,j}-X_{t,2}^{\epsilon,j}]^2 =0 \rbrace = 
P \lbrace \sup_{0 \leq t \leq T} \gamma_1(t)  |X_{t,1}^{\epsilon}-X_{t,2}^{\epsilon}|^2 =0 \rbrace =1 $$
On the interval $[0,\tau_1]$ the processes $X_{t,1}^\epsilon$ and $X_{t,2}^\epsilon$ coincide with probability 1. Hence $P \lbrace \tau_2 \geq \tau_1 \rbrace =1.$
Interchanging the indices 1 and 2 in the proof of the theorem, we can show analogously that $P \lbrace \tau_1 \geq \tau_2 \rbrace =1.$
\end{proof}

\begin{Theorem}
	\label{Theorem 3 GS}
	Let the coefficients of \eqref{diffusion2} be defined and measurable for $t \in [0,1]$, and satisfy the conditions 
	\begin{enumerate}
		\item  For some K
	\begin{equation}
	\label{dissipativity}
	<y,b(y)> + \sum_{i,j} [\sigma_j^i(y)]^2 \leq K^2(1+ |y|^2);
	\end{equation}
		\item 
		for each N there exists an $L_N$ for which
		\begin{equation}
		\label{local Lipschitz}
		\sum_{i}|b^i(y)-b^i(z)| + \sum_{i,j} |\sigma_j^i(y)-\sigma_j^i(z)| \leq L_N|y-z|
		\end{equation}
		with $|y| \leq N$, $|z| \leq N$.
	\end{enumerate} 
Then \eqref{diffusion2} has a unique solution in the sense that for two solutions $X_{t,1}^{\epsilon}$ and $X_{t,2}^{\epsilon}$
$$  P\lbrace \sup_{0 \leq s \leq T} |X_{t,1}^{\epsilon}- X_{t,2}^{\epsilon}|=0  \rbrace =1. $$
\end{Theorem}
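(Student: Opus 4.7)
My plan is to combine the standard truncation/localization procedure with Theorem \ref{Theorem 2 GS}, and then to use the dissipativity condition \eqref{dissipativity} together with Lemma \ref{Gronwall-Lemma} to rule out explosion, which is the only obstruction to passing from a local solution to a solution on all of $[0,T]$. Uniqueness will then follow by applying Theorem \ref{Theorem 2 GS} to two arbitrary solutions and letting the truncation level tend to infinity.

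First I would truncate the coefficients: fix a smooth cut-off $\chi_N$ with $\chi_N\equiv 1$ on $\{|y|\leq N\}$ and $\chi_N\equiv 0$ on $\{|y|\geq N+1\}$, and set $b_N := \chi_N b$, $\sigma_N := \chi_N \sigma$. By the local Lipschitz condition \eqref{local Lipschitz} on $\{|y|\leq N+1\}$ and the fact that $b_N,\sigma_N$ vanish outside this set, the truncated coefficients are globally Lipschitz and of linear growth, so the classical Itô theory produces a unique strong solution $X^{\epsilon,N}_t$ of the truncated SDE. Let $\tau_N := \inf\{t\in[0,T]:|X^{\epsilon,N}_t|>N\}\wedge T$. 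Because $b_N = b_{N+1}$ and $\sigma_N = \sigma_{N+1}$ on $\{|y|\leq N\}$, Theorem \ref{Theorem 2 GS} gives $\tau_N\leq \tau_{N+1}$ a.s. and $X^{\epsilon,N}_{t} = X^{\epsilon,N+1}_t$ for $t\leq \tau_N$. Hence $X^{\epsilon}_t := X^{\epsilon,N}_t$ for $t\leq \tau_N$ is an unambiguously defined process on the stochastic interval $[0,\tau_\infty)$ with $\tau_\infty := \lim_N \tau_N$, and it satisfies the integral form of \eqref{diffusion2} there.

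The key step is showing $\tau_\infty = T$ a.s. For this I apply Itô's formula to $|X^\epsilon_{t\wedge\tau_N}|^2$, using that on $[0,\tau_N]$ the coefficients equal $b$ and $\sigma$, to obtain
\begin{align*}
|X^\epsilon_{t\wedge\tau_N}|^2 = |x|^2 &+ \int_0^{t\wedge\tau_N}\!\bigl[2\langle X^\epsilon_s,b(X^\epsilon_s)\rangle + \epsilon^2\!\!\sum_{i,j}[\sigma^i_j(X^\epsilon_s)]^2\bigr]\,ds \\
&+ 2\epsilon\int_0^{t\wedge\tau_N}\!\langle X^\epsilon_s,\sigma(X^\epsilon_s)\,dw_s\rangle.
\end{align*}
For $\epsilon\leq 1$ the bracketed integrand is bounded by $2K^2(1+|X^\epsilon_s|^2)$ thanks to \eqref{dissipativity}. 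After taking expectations the local martingale term vanishes (the process is bounded by $N$ on $[0,\tau_N]$), and Lemma \ref{Gronwall-Lemma} applied to $m(t):=\mathbb{E}|X^\epsilon_{t\wedge\tau_N}|^2$ yields $m(t)\leq (|x|^2+2K^2T)\,e^{2K^2 t}$, uniformly in $N$. Markov's inequality then gives $\mathbb{P}(\tau_N<T)\leq m(T)/N^2\to 0$, so $\tau_\infty = T$ a.s. and $X^\epsilon$ is a global solution of \eqref{diffusion2}.

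For uniqueness, suppose $X^\epsilon_{t,1}$ and $X^\epsilon_{t,2}$ both solve \eqref{diffusion2}. Define $\tau^i_N$ as the exit time of $X^\epsilon_{t,i}$ from $\{|y|\leq N\}$. On $[0,\tau^1_N\wedge\tau^2_N]$ both processes solve the SDE with the truncated coefficients $b_N,\sigma_N$, so Theorem \ref{Theorem 2 GS} gives $X^\epsilon_{t,1}=X^\epsilon_{t,2}$ there. The $L^2$ estimate established above, applied to each solution, shows $\tau^i_N\to T$ a.s., which yields $\mathbb{P}\{\sup_{0\leq t\leq T}|X^\epsilon_{t,1}-X^\epsilon_{t,2}|=0\}=1$. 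The principal technical obstacle is the non-explosion estimate: one must justify the suppression of the stochastic-integral term uniformly in $N$, which is why stopping at $\tau_N$ (where the integrand is bounded) is essential before invoking Gronwall.
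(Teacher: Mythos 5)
Your proof follows essentially the same route as the paper: truncate the coefficients so that the classical theory and Theorem \ref{Theorem 2 GS} yield consistent local solutions, rule out explosion by combining It\^o's formula for $|X^\epsilon_t|^2$ with the dissipativity bound \eqref{dissipativity} and Lemma \ref{Gronwall-Lemma}, and obtain uniqueness by localization and a Gronwall argument. The differences are cosmetic --- a smooth multiplicative cut-off instead of the paper's hard capping, stopping at $\tau_N$ rather than weighting by $\psi(x_N)$ before taking expectations, and reusing Theorem \ref{Theorem 2 GS} for uniqueness where the paper rewrites the same indicator-plus-Gronwall computation --- and your stopped-process version of the non-explosion estimate is, if anything, the cleaner of the two.
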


\begin{proof} 
	We will first start by showing the existence and afterwards we move to the uniqueness of the solution.
	Define $x_N^i$ (the i-th component of the vector $x_N$) as $x_N^i=x^i$ for $|x^i|\leq N$ and $x_N^i=N sign(x^i)$ for $|x^i|>N$, 
	$b_N^i(y)=b^i(y)$ for $|b^i(y)| \leq N $ and
	$b_N^i(y)=N sign(b^i(y))$ for $|b^i(y)|> N $,
	$\sigma_{j,N}^i(y)=\sigma_{j}^i(y)$ for $|\sigma_{j}^i|\leq N$ and $\sigma_{j,N}^i(y)=N sign(\sigma_{j}^i(y))$ for $|\sigma_{j}^i|> N.$\\
	By $X_{t,N}^\epsilon$ we denote the solution of
	\begin{equation}
	\label{approximate-diffusion}
	\dot{X}_{t,N}^\epsilon =b_N(X_{t,N}^\epsilon)+ \epsilon \sigma_N(X_{t,N}^\epsilon)\dot{w}_t, \ X_{t,N}^\epsilon=x_N.
	\end{equation}
	
	For this equation all conditions for existence are given, because we have a growth bound depending on N and for the coefficients we also have a global Lipschitz condition.

	Let $\tau_N$ be the largest value of t for which $\sup_{0 \leq s \leq t}|X_{t,N}^\epsilon| \leq N$. Let $N^{'} >N$. Since $b_N(y)=b_{N^{'}}(y)$ and $\sigma_N(y)=\sigma_{N^{'}}(y)$ for all $|b_N^i(y)|\leq N,$ $|\sigma_{j,N}^i| \leq N,$ we can now apply Theorem \ref{Theorem 2 GS} to obtain
	$X_{t,N}^\epsilon=X_{t,N^{'}}^\epsilon$, with probability 1 for $t \in [0,\tau_N]$.\\
	Hence for $N^{'}>N$:
	$$ P \lbrace \sup_{0 \leq t \leq T}|X_{t,N}^\epsilon-X_{t,N^{'}}^\epsilon|>0 \rbrace \leq P \lbrace \tau_N > T \rbrace= P \lbrace \sup_{0 \leq t \leq T}|X_{t,N}^\epsilon|>N \rbrace. $$
	
	If we can show that the probability on the right hand side converges to zero for $N \rightarrow \infty$, then it will clearly follow that $X_{t,N}^\epsilon$ converges uniformly with probability 1 to some limit $X_t^\epsilon$ as $N \rightarrow \infty$.\\
	Going to the limit in 
	$$
	X_{s,N}^\epsilon =x_N+ \int_{0}^{t} b_N(X_{s,N}^\epsilon) ds+ \epsilon \int_{0}^{t} \sigma_N(X_{s,N}^\epsilon) dw_s$$
	we convince ourselves that $X_{t}^\epsilon$ is equal with probability 1 to the continuous solution of \eqref{diffusion2}.\\
	So to finish the proof of the existence of a solution it remains to show that 
	\begin{equation}
	\label{probability N}
	\lim_{N \rightarrow \infty}
	P \lbrace \sup_{0 \leq t \leq T}|X_{t,N}^\epsilon|>N \rbrace=0.
	\end{equation}
	To do this we first define the function $\psi(y)=\frac{1}{1+|y|^2}$ and then we
	 use the Ito formula.
	We obtain
	\begin{align*}
	&M[|X_{t,N}^\epsilon|^2 \psi(x_N)]-M[|x_N|^2\psi(x_N)] \\
	=& M\Big \lbrack  \int_{0}^{t} 2 \psi(x_N) <X_{s,N}^\epsilon,b(X_{s,N}^\epsilon)> + \epsilon^2 \psi(x_N) \sum_{k=1}^{r} \sum_{i=1}^{l} [\sigma_{i,N}^k(X_{s,N}^\epsilon)]^2 dt \Big \rbrack   \\
	 \leq & M\Big \lbrack \psi(x_N) \int_{0}^{t} 2K^2(1+|X_{s,N}^\epsilon|^2) + \epsilon^2 \psi(x_N) K^2(1+|X_{s,N}^\epsilon|^2) dt \Big \rbrack   \\
	 \leq &  \psi(x_N) (2K^2t+\epsilon^2 K^2 t)+ (2K^2+\epsilon^2K^2)\int_{0}^{t} M[\psi(x_N) |X_{s,N}^\epsilon|^2] dt  
	\end{align*}
	
	By having that we can use Lemma \ref{Gronwall-Lemma} to get
	$$	M[\psi (x_N)|X_{t,N}^\epsilon|^2] 
	\leq [\psi(x_N)(2K^2t+\epsilon^2K^2t)+|x_N|^2 \psi(x_N) ]e^{(2K^2+ \epsilon^2K^2)t}.$$
	Which means we have 
	$$M[\psi (x_N) \sup_{0 \leq t \leq T}|X_{t,N}^\epsilon|^2] \leq C_1$$
where $C_1$ is independent of N.\\
We can moreover write
\begin{align*}
P \lbrace \sup_{0 \leq t \leq T} |X_{t,N}^\epsilon|>N \rbrace 
&= P \lbrace \psi(x_N)\sup_{0 \leq t \leq T} |X_{t,N}^\epsilon|^2>N^2 \psi(x_N) \rbrace \\
&\leq P \lbrace \psi(x_N)\sup_{0 \leq t \leq T} |X_{t,N}^\epsilon|^2> \delta N^2  \rbrace + P \lbrace \psi(x_N) \leq \delta \rbrace \\
& \leq \frac{C_1}{\delta N^2}+  P \lbrace \psi(x_N) \leq \delta \rbrace,
\end{align*}
where the last inequality follows from the Chebychev inequality. \\
Consequently
$$\overline{\lim\limits_{N \rightarrow \infty}} P \lbrace \sup_{0 \leq t \leq T} |X_{t,N}^\epsilon|>N \rbrace \leq P \lbrace \psi (x_N) \leq \delta \rbrace. $$	
Since $\delta$ is an arbitrary positive number and $P \lbrace \psi(x_N)=0 \rbrace =0, \eqref{probability N}$ results from the preceding relation. This completes the proof of the existence of a solution to \eqref{diffusion2}.\\
\\	
Next we want to prove the uniqueness of the solution.
Let $X_{t,1}^\epsilon$ and $X_{t,2}^\epsilon $ be two solutions of \eqref{diffusion2}. Denoting by $\phi(t)$ the variable equal to 1 if 
$\sup_{0 \leq s \leq t}|X_{s,1}^{\epsilon,i}|\leq N$ and $\sup_{0 \leq s \leq t}|X_{s,2}^{\epsilon,i}|\leq N$ and equal to 0 otherwise. Using our second condition we can write
\begin{align*}
M|X_{t,1}^{\epsilon}-X_{t,2}^\epsilon|^2 \phi(t)
& \leq 2 M[\phi(t)(\int_{0}^{t} \sum_{i}|b^i(X_{s,1}^\epsilon)-b^i(X_{s,2}^\epsilon)| ds)^2] \\
& + 2 M[\phi(t)(\int_{0}^{t} \sum_{i,j}|\sigma_{j}^i(X_{s,1}^\epsilon)-\sigma_{j}^i(X_{s,2}^\epsilon)| dw_s)^2] \\
& \leq  2t M[(\int_{0}^{t} \phi(s) \sum_{i}|b^i(X_{s,1}^\epsilon)-b^i(X_{s,2}^\epsilon)|^2 ds)] \\
& + 2 M[(\int_{0}^{t} \phi(s) \sum_{i,j}|\sigma_{j}^i(X_{s,1}^\epsilon)-\sigma_{j}^i(X_{s,2}^\epsilon)|^2 ds)] \\
& \leq (2T+2) L_N^2 \int_{0}^{t} M(\phi(s)|X_{s,1}^\epsilon-X_{s,2}^\epsilon|^2 ds)
\end{align*}

Where we first used that $a^2+b^2 \geq 2ab$, then the Cauchy-Schwarz inequality and the properties of an Ito integral and afterwards the local Lipschitz continuity.
Then we need to use Lemma \ref{Gronwall-Lemma} with $C=0$ to get 
$M|X_{t,1}^{\epsilon}-X_{t,2}^\epsilon|^2 \phi(t)=0$, which means
$$P\lbrace X_{t,1}^\epsilon \neq X_{t,2}^\epsilon \rbrace \leq 
P\lbrace \sup_{0 \leq s \leq T} |X_{t,1}^\epsilon|>N \rbrace + P \lbrace \sup_{0 \leq s \leq T} |X_{t,2}^\epsilon|>N \rbrace.  $$
This is because  $\phi (t)$ was zero for 
$\sup_{0 \leq s \leq t}|X_{s,1}^{\epsilon,i}| >N$ or $\sup_{0 \leq s \leq t}|X_{s,2}^{\epsilon,i}|> N$.
From the continuity of $X_{t,1}^\epsilon$ and $X_{t,2}^\epsilon$ follows their boundedness. Hence the probability on the right side of this inequality tend to zero as N $\rightarrow \infty$, i.e., for all $t \in [0,T]$: $P \lbrace X_{t,1}^\epsilon= X_{t,2}^\epsilon \rbrace =1 $ from which the uniqueness follows in the sense that $P \lbrace \sup_{0 \leq t \leq T}|X_{t,1}^\epsilon-X_{t,2}^\epsilon|=0 \rbrace =1$.
\end{proof}

\subsection{Zeroth Order Approximation for Dissipative Case }

After proving the existence and uniqueness of a solution to \eqref{diffusion2} with our conditions on coefficients  we want to prove convergence of the solution  of \eqref{diffusion2}  $X_t^\epsilon$  to the solution of \eqref{zeroth1} $x_t$ as $\epsilon \to 0$ under dissipativity and dissipativity for differences for the drift vector and the local Lipschitz condition for all coefficients.

\begin{Theorem} \label{Theorem 1.2.2}
	Assume that the coefficients of \eqref{diffusion2} satisfy a local Lipschitz condition, $\sigma$ increases no faster than linearly and b satisfies dissipativity and dissipativity for the differences:
	
		\begin{enumerate}
		\item  For some K
		\begin{align}
		\label{dissipativity 2}
		<y,b(y)> + \sum_{i,j} [\sigma_j^i(y)]^2 &\leq K^2(1+ |y|^2);
		\\
		<y-z,b(y)-b(z)> &\leq K^2(1+ |y-z|^2);
		\label{dissipativity for differences}
		\end{align}
		\item 
		for each N there exists an $L_N$ for which
		\begin{equation}
		\label{local Lipschitz 2}
		\sum_{i}|b^i(y)-b^i(z)| + \sum_{i,j} |\sigma_j^i(y)-\sigma_j^i(z)| \leq L_N|y-z|
		\end{equation}
		with $|y| \leq N$, $|z| \leq N$.
	\end{enumerate} 
	
	Then for all $t>0$ and $\delta>0$ we have:
	\begin{enumerate}
		\item $M|X_t^\epsilon-x_t|\leq \epsilon^2 a(t)$, and
		\item $\lim_{\epsilon \to 0} P\lbrace \max_{0 \leq s \leq t} |X_s^\epsilon -x_s|>  \delta \rbrace=0$
	\end{enumerate}
where a(t) is a monotone increasing function, which is expressed in terms of $|x|$ and K.
	
\end{Theorem}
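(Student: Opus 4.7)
The natural route is to apply It\^o's formula to $|X_t^\epsilon - x_t|^2$, take expectation to kill the martingale part, use the dissipativity for differences on the drift term and the linear growth of $\sigma$ on the It\^o-correction term, and reduce the resulting integral inequality via Lemma \ref{Gronwall-Lemma}. Part (2) is then obtained from an $L^2$ maximal-inequality version of the same argument followed by Chebyshev. Two preparatory items make the argument go through: a uniform-in-$\epsilon$ second-moment bound on $X_t^\epsilon$, and a localization to deal with the local (rather than global) Lipschitz assumption.

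First I would apply It\^o's formula to $|X_t^\epsilon|^2$: the drift plus the It\^o correction give the integrand $2\langle X_s^\epsilon, b(X_s^\epsilon)\rangle + \epsilon^2 \sum_{i,j}[\sigma_j^i(X_s^\epsilon)]^2$, which for $\epsilon\le 1$ is dominated by $2K^2(1+|X_s^\epsilon|^2)$ thanks to \eqref{dissipativity 2}. Localizing by $\tau_N=\inf\{s:|X_s^\epsilon|>N\}$ to make the stochastic integral a true martingale under expectation, applying Lemma \ref{Gronwall-Lemma}, and passing to the limit via Fatou yields $\sup_{\epsilon\le 1} M|X_s^\epsilon|^2 \le C_0(t)$ with $C_0$ monotone and explicit in $|x|$ and $K$.

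Next, subtracting the two integral equations and applying It\^o's formula to $|X_t^\epsilon - x_t|^2$ (the ODE trajectory $x_t$ carries no It\^o correction), then taking expectation under the same localization, I obtain
\begin{equation*}
M|X_t^\epsilon - x_t|^2 = \int_0^t \Bigl[ 2 M\langle X_s^\epsilon - x_s, b(X_s^\epsilon) - b(x_s)\rangle + \epsilon^2 M\sum_{i,j}[\sigma_j^i(X_s^\epsilon)]^2 \Bigr]\, ds.
\end{equation*}
The dissipativity for differences \eqref{dissipativity for differences} bounds the first integrand by a multiple of $M|X_s^\epsilon - x_s|^2$, while the linear growth of $\sigma$ together with the bound $C_0(s)$ reduces the $\epsilon^2$-term to $\epsilon^2$ times a deterministic function $B(t)$. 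Lemma \ref{Gronwall-Lemma} then gives the moment bound $M|X_t^\epsilon - x_t|^2 \le \epsilon^2 B(t) e^{2K^2 t}$, which is the content of part (1) with $a(t)=B(t)e^{2K^2 t}$.

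For part (2) I would repeat the argument with the supremum inside: starting from the integral form of $X_t^\epsilon - x_t$, Cauchy--Schwarz controls the drift integral and Doob's $L^2$ maximal inequality controls the stochastic integral, yielding $M\max_{0\le s\le t}|X_s^\epsilon - x_s|^2 \le \epsilon^2 \tilde a(t)$ after a second Gronwall; Chebyshev then produces the stated convergence in probability. The main obstacle, and the only nontrivial point, is that the local Lipschitz constant $L_N$ cannot be invoked uniformly on a whole path of $X^\epsilon$. I would handle this exactly as in the uniqueness half of the proof of Theorem \ref{Theorem 3 GS}: introduce $\tau_N = \inf\{s : |X_s^\epsilon| \vee |x_s| > N\}$ and split $P\{\max_{0\le s\le t}|X_s^\epsilon - x_s| > \delta\}$ into the event $\{\tau_N \le t\}$, which by Step 1 can be made arbitrarily small uniformly in $\epsilon$ by choosing $N$ large, and its complement, on which the global Lipschitz estimate with constant $L_N$ is available and the entire argument above goes through verbatim.
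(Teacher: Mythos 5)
Your overall architecture (uniform second-moment bound, It\^o applied to $|X_t^\epsilon-x_t|^2$, Gronwall, then Chebyshev) is the same as the paper's, but there is a genuine gap at the central step of part (1). You claim that the dissipativity for differences \eqref{dissipativity for differences} bounds the drift term $2M\langle X_s^\epsilon-x_s,\,b(X_s^\epsilon)-b(x_s)\rangle$ by ``a multiple of $M|X_s^\epsilon-x_s|^2$''. It does not: the hypothesis reads $\langle y-z,b(y)-b(z)\rangle\le K^2(1+|y-z|^2)$, so what you actually obtain is $2K^2+2K^2\,M|X_s^\epsilon-x_s|^2$. The additive constant $2K^2$ carries no factor of $\epsilon$, so after Gronwall you get $M|X_t^\epsilon-x_t|^2\le(2K^2t+\epsilon^2B(t))e^{2K^2t}$, which does not tend to $0$ as $\epsilon\to0$ and in particular is not of the form $\epsilon^2a(t)$. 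This is precisely the difficulty that the presence of two separate drift hypotheses in the theorem is designed to overcome, and your plan invokes the wrong one at this point.

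The paper's resolution, which your part-(2) localization gestures at but does not carry out quantitatively, is to split the expectation of the drift term according to whether both $\sup_{0\le s\le T}|X_s^\epsilon|$ and $\sup_{0\le s\le T}|x_s|$ stay below $N$. On that event the local Lipschitz condition \eqref{local Lipschitz 2} applies and yields the homogeneous bound $2L_N\,M|X_s^\epsilon-x_s|^2$ with no additive constant, so Gronwall closes with $C$ proportional to $\epsilon^2$; on the complementary event one falls back on \eqref{dissipativity for differences}, and the resulting inhomogeneous contribution is harmless because $N$ is chosen so that the probability of that event is at most $\epsilon^2$ --- not merely ``arbitrarily small'' --- which restores the factor $\epsilon^2$ in front of it. Your final paragraph chooses $N$ only to make the bad event small uniformly in $\epsilon$, which suffices for the qualitative statement (2) but not for the quantitative bound (1); to repair part (1) you must tie the choice of $N$ to $\epsilon$ exactly as above. (Your use of Doob's maximal inequality for part (2) is, incidentally, a sound and arguably cleaner route to the convergence in probability than the paper's direct Chebyshev step, which silently upgrades the fixed-time bound to a bound on the maximum.)
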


\begin{proof}
	We start by showing that $M|X_t^\epsilon|^2$ is bounded uniformly in $\epsilon \in [0,1]$.
	To show that we first apply Ito's formula  to get
	\begin{align*}
	(1+|X_t^\epsilon|^2)-(1+|x|^2) 
	& = \sum_{k=1}^{r} \sum_{i=1}^{l} 2\int_{0}^{t}|(X_s^\epsilon)^i| \epsilon \sigma_i^k(X_s^\epsilon)dw_s^k 
	+  \int_{0}^{t} \Big \lbrack 2<X_s^\epsilon,b(X_s^\epsilon)> \\
	&+ \epsilon^2 \sum_{k=1}^{r} \sum_{i,j=1, i=j}^{l} \sigma_i^k(X_s^\epsilon)\sigma_j^k(X_s^\epsilon) \\
	& + \epsilon^2 \sum_{k=1}^{r} \sum_{i,j=1, i \neq j}^{l} \sigma_i^k(X_s^\epsilon)\sigma_j^k(X_s^\epsilon)
	\Big \rbrack  ds.
	\end{align*}
	Applying the mathematical expectation and adding $(1+|x|^2)$ on both sides we obtain:
	
	$$1+M|X_t^\epsilon|^2= 1+|x|^2+ 2 \int_{0}^{t} M <X_s^\epsilon, b(X_s^\epsilon)>ds + \epsilon^2 \int_{0}^{t} M \sum_{i,j} [\sigma_j^i(X_s^\epsilon)]^2 ds.$$	

	Using the Cauchy-Schwarz inequality, that $\sigma$ in \eqref{diffusion2} increases no faster than linearly and the dissipativity for b, the last relation implies the estimate
	\begin{align*}
	1+M|X_t^\epsilon|^2 
	&= 1+ |x|^2+ 2\int_{0}^{t} M<X_s^\epsilon,b(X_s^\epsilon)>ds
	+ \epsilon^2 \int_{0}^{t} M \sum_{i,j} [\sigma_j^i(X_s^\epsilon)]^2 ds \\
	& \leq 1+ |x|^2+ 2 \int_{0}^{t} M [K^2(1+|X_s^\epsilon|^2)] ds
	+\epsilon^2 \int_{0}^{t} M [K^2(1+|X_s^\epsilon|^2)] ds\\
	& \leq  1+ |x|^2+ (2 K^2 + \epsilon^2 K^2) \int_{0}^{t} (1+M|X_s^\epsilon|^2)ds.
	\end{align*}
	
	Next we use Lemma \ref{Gronwall-Lemma}
	we choose $m(t)=1+M|X_t^\epsilon|^2$, $C=1+|x|^2$ and $\alpha=(2K^2+\epsilon^2K^2)$.
	By doing this we obtain
	\begin{equation}
	\label{(1.5.2)}
	1+M|X_t^\epsilon|^2 \leq (1+|x|^2) \exp[(2K^2+\epsilon^2K^2)t].
	\end{equation}
	
	By the inequality we proved that $M|X_t^\epsilon|^2$ is bounded uniformly in $\epsilon \in [0,1]$.
	In the next step we want to use our result to prove that $M|X_t^\epsilon-x_t| \leq \epsilon^2 a(t)$. To do this we work through it very similarly to before.
	
	We apply the Ito formula to the function $|X_t^\epsilon-x_t|^2$, which works the same way as it did with $1+|X_t^\epsilon|$, just that the starting term vanishes, because $X_0^\epsilon=x=x_0$.
	Next we apply the mathematical expectation on both sides of the equality to get
	$$ M|X_t^\epsilon-x_t|^2 = 2 \int_{0}^{t} M <X_s^\epsilon-x_s, b(X_s^\epsilon)-b(x_s)>ds
	+\epsilon^2\int_{0}^{t}M\sum_{i,j} [\sigma_j^i(X_s^\epsilon)]^2 ds.$$
	
In the proof of the existence we proved \eqref{probability N}. Since $X_{t,N}^{\epsilon}$ converges to $X_t^\epsilon$ as $N \to \infty$ we also know
\begin{equation}
\lim\limits_{N \to \infty} P \lbrace \sup_{0 \leq s \leq T} |X_s^\epsilon| >N  \rbrace =0
\end{equation}
and
\begin{equation}
\lim\limits_{N \to \infty} P \lbrace \sup_{0 \leq s \leq T} |x_s| >N  \rbrace =0.
\end{equation}
From this follows that there exists an N, such that 
\begin{equation}
\label{prob1}
P \lbrace \sup_{0 \leq s \leq T} |X_s^\epsilon| >N  \rbrace \leq \frac{\epsilon^2}{2}
\end{equation}
and 
\begin{equation}
\label{prob2}
P \lbrace \sup_{0 \leq s \leq T} |x_s| >N  \rbrace \leq \frac{\epsilon^2}{2}.
\end{equation}

In the following calculations we first split up our mathematical expectation in two different cases, then we use the Cauchy-Schwarz inequality and \eqref{dissipativity 2}.
Afterwards we apply the local Lipschitz condition \eqref{local Lipschitz 2} and the dissipativity for the differences for b \eqref{dissipativity for differences}.Then we estimate the probabilities we used in the inequality:
\begin{align*}
    M|X_t^\epsilon-x_t|^2 
    &= 2 \int_{0}^{t} M <X_s^\epsilon-x_s, b(X_s^\epsilon)-b(x_s)>ds
	+\epsilon^2\int_{0}^{t}M\sum_{i,j} [\sigma_j^i(X_s^\epsilon)]^2 ds \\ 
	&\leq P \lbrace \max \lbrace \sup_{0 \leq s \leq T} |X_s^\epsilon|, \sup_{0 \leq s \leq T}|x_s| \rbrace \leq N \rbrace \\
	& 2 \int_{0}^{t} M \Big [ \sqrt{|X_s^\epsilon-x_s|^2\sum_{i} [b^i(X_s^\epsilon)-b^i(x_s)]^2}ds \Big |
	 \max \lbrace \sup_{0 \leq s \leq T} |X_s^\epsilon|, \sup_{0 \leq s \leq T}|x_s| \rbrace \leq N
	  \Big ]\\
	&+ P \lbrace \max \lbrace \sup_{0 \leq s \leq T} |X_s^\epsilon|, \sup_{0 \leq s \leq T}|x_s| \rbrace > N \rbrace \\
	&2 \int_{0}^{t}
	M[<X_s^\epsilon-x_s, b(X_s^\epsilon)-b(x_s)>|\max \lbrace \sup_{0 \leq s \leq T} |X_s^\epsilon|, \sup_{0 \leq s \leq T}|x_s| \rbrace > N ]
	ds \\ 
	&+\epsilon^2 K^2 \int_{0}^{t}(1+M|X_s^\epsilon|^2) ds \\	
	&\leq P \lbrace \max \lbrace \sup_{0 \leq s \leq T} |X_s^\epsilon|, \sup_{0 \leq s \leq T}|x_s| \rbrace \leq N \rbrace
	2 \int_{0}^{t} M \sqrt{|X_s^\epsilon-x_s|^2 L_N^2 |X_s^\epsilon-x_s|^2}ds \\
	&+ P \lbrace \max \lbrace \sup_{0 \leq s \leq T} |X_s^\epsilon|, \sup_{0 \leq s \leq T}|x_s| \rbrace > N \rbrace
	2 \int_{0}^{t} K^2(1+M|X_s^\epsilon-x_s|^2)ds \\
	&+\epsilon^2 K^2 \int_{0}^{t}(1+M|X_s^\epsilon|^2) ds \\ 
	&\leq
	2 L_N \int_{0}^{t} M |X_s^\epsilon-x_s|^2 ds \\
	&+ 
	(P \lbrace \sup_{0 \leq s \leq T} |X_s^\epsilon|> N \rbrace+
	P \lbrace \sup_{0 \leq s \leq T} |x_s|> N \rbrace)
	[2 K^2t+ 2 K^2 \int_{0}^{t} M|X_s^\epsilon-x_s|^2 ds] \\
	&+\epsilon^2 K^2 \int_{0}^{t}(1+M|X_s^\epsilon|^2) ds \\ 
	&\leq
	2 L_N \int_{0}^{t} M |X_s^\epsilon-x_s|^2 ds + \epsilon^2
	2 K^2t + \epsilon^2 2 K^2 \int_{0}^{t} M|X_s^\epsilon-x_s|^2 ds \\
	&+\epsilon^2 K^2 \int_{0}^{t}(1+M|X_s^\epsilon|^2) ds \\
	&\leq
	(2 L_N+\epsilon^2 2K^2) \int_{0}^{t} M |X_s^\epsilon-x_s|^2 ds  + \epsilon^2
	2 K^2t +\epsilon^2 K^2 \int_{0}^{t}(1+M|X_s^\epsilon|^2) ds.
\end{align*}
	
	We use Lemma \ref{Gronwall-Lemma} again and this time we choose $m(t)=M|X_t^\epsilon-x_t|^2, \ \alpha=(2 L_N+ \epsilon^2 2K^2), \ C= \epsilon^2 2K^2 t+
	\epsilon^2K^2 \int_{0}^{t}(1+M|X_s^\epsilon|^2)ds$.
	By this we get  
	\begin{align*}
	M|X_t^\epsilon-x_t|^2 
	&\leq e^{(2 L_N+ \epsilon^2 2K^2)t} 
	[\epsilon^2 2K^2 t+
	\epsilon^2K^2 \int_{0}^{t}(1+M|X_s^\epsilon|^2)ds]\\
	& \leq e^{(2 L_N+ \epsilon^2 2K^2)t} \epsilon^2 2 K^2 t + e^{(2 L_N+ \epsilon^2 2K^2)t} \epsilon^2 K^2 \int_{0}^{t} (1+|x|^2) \exp[(2K+\epsilon^2K^2)s] ds \\
	& \leq \epsilon^2 2 K^2 t e^{(2 L_N+ \epsilon^2 2K^2)t}  + \epsilon^2 K^2 e^{(2 L_N+ \epsilon^2 2K^2)t}
	(1+|x|^2) \int_{0}^{t}  \exp[(2K+\epsilon^2K^2)s] ds \\
	& \leq \epsilon^2 a(t).
	\end{align*}
	
	Where we used the result \eqref{(1.5.2)} and a(t) is chosen such that it is a monotone  increasing function.

Now we want to prove the second assertion of the theorem.
We will now use the Chebyshev inequality that says that $P\lbrace \xi(\omega) \geq a \rbrace \leq \frac{Mf(\xi)}{f(a)}$.
	By setting $\xi(\omega) = \max_{0 \leq s \leq t}|X_s^\epsilon-x_s|, \ a=\delta, \ f(x)=x^2$ and  applying the first assertion of the theorem we obtain
	\begin{align}
	\label{eq Cheb}
	P\lbrace \max_{0 \leq s \leq t}|X_s^\epsilon-x_s|> \delta \rbrace 
	&\leq \frac{M[\max_{0 \leq s \leq t}|X_s^\epsilon-x_s|]^2}{\delta^2} \nonumber \\
		& \leq \frac{ \epsilon^2 a(t)}{\delta^2}
	   \nonumber  \\ 
\end{align}
	Taking limits on both sides in \eqref{eq Cheb} , we get
\[
	\lim_{\epsilon \to 0} P\lbrace \max_{0 \leq s \leq t} |X_s^\epsilon -x_s|>  \delta \rbrace \leq \lim_{\epsilon \to 0} \frac{ \epsilon^2 a(t)}{\delta^2}=0
\]
\end{proof}

\subsection{Parabolic Differential equations with a Small Parameter: Cauchy Problem, Transport Equation}

We aim to obtain results concerning the behavior of solutions of Cauchy problem as $\epsilon \to 0$ from the behavior of $X^\epsilon_t(w)$ as $\epsilon \to 0$.In the preceding section we have obtained a result concerning the the behavior of solutions $X^\epsilon_t(w)$ as $\epsilon \to 0$, which will be used in the present section.
We consider the Cauchy problem:
\begin{equation}
\label{(3.2.2)}
\frac{\partial v^\epsilon(t,x)}{\partial t} = L^\epsilon v^\epsilon(t,x) + c(x) v^\epsilon (t,x) +g(x), \ v^\epsilon(0,x)=f(x),
\end{equation}
$t>0, \ x \in \mathbb{R}^r$ for $\epsilon>0$ and together with it the problem for the first-order operator which is obtained for $\epsilon=0$:
\begin{equation}
\label{(3.3.2)}
\frac{\partial v^0(t,x)}{\partial t} = L^0 v^0(t,x) + c(x) v^0 (t,x) +g(x), \ v^0(0,x)=f(x).
\end{equation}
Where $L^\epsilon$ is a differential operator with a small parameter at the derivatives of highest order:
$$ L^\epsilon= \frac{\epsilon^2}{2} \sum_{i,j=1}^{r} a^{ij}(x) \frac{\partial^2}{\partial x^i x^j}+ \sum_{i=1}^{r} b^i(x) \frac{\partial}{\partial x^i}.$$
Every  operator $L^\epsilon$ (whose coefficients are assumed to be sufficiently regular) has an associated  diffusion process $X_t^{\epsilon,x}$.
This diffusion process can be given by means of the stochastic equation
\begin{equation}
\label{(3.1)}
\dot{X}_t^{\epsilon,x}=b(X_t^{\epsilon,x})+ \epsilon \sigma(X_t^{\epsilon,x})\dot{\omega}_t, \ X_0^{\epsilon,x}=x,
\end{equation}
where $\sigma(x)\sigma^{*}(x)=(a^{ij}(x)),\ b(x)=(b^1(x),...,b^r(x)).$\\
We assume the following conditions are satisfied:
\begin{enumerate}
	\item the function c(x) is uniformly continuous and bounded for $x \in \mathbb{R}^r$;
	\item the coefficients of $L^1$ satisfy a local Lipschitz condition, b satisfies dissipativity and dissipativity for the differences;
	
	
	\item $k^{-2} \sum\lambda_t^2 \leq \sum_{i,j=1}^r a^{ij}(x) \lambda_i \lambda_j \leq k^2 \sum \lambda_i^2$ for any real $\lambda_1,\lambda_2,...,\lambda_r$ \\ and $x \in \mathbb{R}^r$, where $k^2$ is a positive constant.
\end{enumerate}
Under these conditions, the solutions of problems \eqref{(3.2.2)} and \eqref{(3.3.2)} exist and are unique.
Having these conditions we obtain the following result:
	\begin{Theorem}
	\label{Theorem Cauchy Dissipativity}
	If conditions (1)-(3) are satisfied, then the limit $\lim\limits_{\epsilon \to 0} v^\epsilon(t,x)= v^0(t,x)$ exists for every bounded continuous initial function f(x), $x \in \mathbb{R}^r$. \\
	The function $v^0(t,x)$ is a solution of problem \eqref{(3.3.2)}.	
\end{Theorem}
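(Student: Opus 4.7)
The strategy is to represent both $v^\epsilon$ and $v^0$ probabilistically via a Feynman--Kac formula along the trajectories of $X_t^{\epsilon,x}$, and then transfer the pathwise convergence $X_t^{\epsilon,x}\to x_t$ furnished by Theorem~\ref{Theorem 1.2.2} to convergence of the expectations by a dominated-convergence argument. The uniform ellipticity hypothesis (3) together with the regularity in (1)--(2) ensures that \eqref{(3.2.2)} has a classical bounded solution, so that Feynman--Kac applies and yields
\[
v^\epsilon(t,x)=M\!\left[f(X_t^{\epsilon,x})\exp\!\Big(\!\int_0^t c(X_s^{\epsilon,x})\,ds\Big) + \int_0^t g(X_s^{\epsilon,x})\exp\!\Big(\!\int_0^s c(X_u^{\epsilon,x})\,du\Big)\,ds\right].
\]
For $\epsilon=0$ the diffusive term in \eqref{(3.1)} disappears and $X_t^{0,x}=x_t$ is the deterministic characteristic solving \eqref{zeroth1}, so the right-hand side reduces to a deterministic function of $t$ and $x$ which, by the method of characteristics, is precisely the unique bounded solution of \eqref{(3.3.2)}.

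\textbf{Passage to the limit.} By the second assertion of Theorem~\ref{Theorem 1.2.2}, for every fixed $t>0$ and every $\delta>0$ one has $P\{\max_{0\le s\le t}|X_s^{\epsilon,x}-x_s|>\delta\}\to 0$ as $\epsilon\to 0$. Continuity of $f$ and of $c$ (uniform for $c$), together with the continuous mapping theorem applied to the functionals $\omega\mapsto f(X_t^{\epsilon,x}(\omega))$ and $\omega\mapsto \int_0^{\cdot} c(X_s^{\epsilon,x}(\omega))\,ds$, then gives convergence in probability of the integrands to their deterministic counterparts $f(x_t)$ and $\int_0^{\cdot}c(x_s)\,ds$. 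Boundedness of $f$ and $c$ (and an analogous assumption on $g$, which follows from the local boundedness of $g$ and the uniform bound \eqref{(1.5.2)} for $M|X_t^\epsilon|^2$ together with a truncation argument if $g$ is merely continuous) makes the full integrands uniformly bounded, so dominated convergence upgrades the in-probability convergence to convergence of expectations, yielding $v^\epsilon(t,x)\to v^0(t,x)$ pointwise in $(t,x)$.

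\textbf{Identification of the limit and main difficulty.} Because the Feynman--Kac expression for $\epsilon=0$ is exactly the integrated form of the characteristic ODE for the first-order operator $L^0+c$ with source $g$, a direct differentiation in $t$ along $x_t$ shows that $v^0(t,x)$ satisfies \eqref{(3.3.2)} with $v^0(0,x)=f(x)$; by linearity and boundedness this is also the unique bounded solution. The principal technical obstacle is neither the Feynman--Kac representation (standard under condition~(3)) nor the convergence of $X_t^{\epsilon,x}$ (already granted by Theorem~\ref{Theorem 1.2.2}), but rather justifying the interchange of limit and expectation uniformly in the exponential weight $\exp(\int_0^t c(X_s^{\epsilon,x})\,ds)$: here the uniform bound $\|c\|_\infty<\infty$ from hypothesis~(1) is essential, since it dominates the exponential by $e^{t\|c\|_\infty}$ and removes any concern about moments of $X_t^{\epsilon,x}$ beyond what \eqref{(1.5.2)} already provides. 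The transport equation case $c\equiv 0,\ g\equiv 0$ follows a fortiori.
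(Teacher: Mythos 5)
Your proposal is correct and follows essentially the same route as the paper: Feynman--Kac representation of $v^\epsilon$, convergence in probability of $X_s^{\epsilon,x}$ to the deterministic characteristic from Theorem~\ref{Theorem 1.2.2}, dominated convergence using the boundedness of $f$ and $c$, and identification of the $\epsilon=0$ expression as the solution of \eqref{(3.3.2)}. Your added remarks on the integrability of the $g$-term and on where the interchange of limit and expectation needs care are reasonable elaborations of steps the paper leaves implicit, not a different argument.
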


\begin{proof}
	If condition (3) is satisfied, then there exists a matrix $\sigma(x)$ with entries satisfying a local Lipschitz condition for which $\sigma(x)\sigma^{*}(x)=(a^{ij}(x))$. 
	The solution of \eqref{(3.2.2)} can be represented in the following way (\cite{FW} Chap.1, Sec.5):
	\begin{equation}
	\label{(3.4.2)}
	v^\epsilon(t,x)= M[f(X_t^{\epsilon,x}) \exp\lbrace \int_{0}^{t}c(X_s^{\epsilon,x})ds \rbrace] + M[\int_{0}^{t} g(X_s^{\epsilon,x}) \exp \lbrace \int_{0}^{s} c(X_u^{\epsilon,x})du \rbrace ds].
	\end{equation}
	This stays true for the changed conditions, because of the uniqueness of the solution.
	\\
	From Theorem \ref{Theorem 1.2.2} follows the  convergence of $X_s^{\epsilon,x}$ to $X_s^{0,x}$ in probability on the interval $[0,t]$ as $\epsilon \to 0$.
	Taking into account that there is a bounded continuous functional of $X_s^{\epsilon,x}(\omega)$ under the sign of mathematical expectation in \eqref{(3.4.2)},
	by the Lebesgue dominated convergence theorem, which we can use because the functional is bounded, we obtain
	\begin{align*}
	\lim\limits_{\epsilon \downarrow 0} v^\epsilon(t,x) 
	&= \lim\limits_{\epsilon \downarrow 0} M[f(X_t^{\epsilon,x}) \exp \lbrace \int_{0}^{t} c(X_s^{\epsilon,x})ds \rbrace] \\
	&+ \lim\limits_{\epsilon \downarrow 0} [\int_{0}^{t} g(X_s^{\epsilon,x})\exp \lbrace \int_{0}^{s} c(X_u^{\epsilon,x})du \rbrace ds] \\
	&= M[ \lim\limits_{\epsilon \downarrow 0} f(X_t^{\epsilon,x}) \exp \lbrace \int_{0}^{t} c(X_s^{\epsilon,x})ds \rbrace] \\
	&+ M[ \lim\limits_{\epsilon \downarrow 0} \int_{0}^{t} g(X_s^{\epsilon,x})\exp \lbrace \int_{0}^{s} c(X_u^{\epsilon,x})du \rbrace ds] \\
	&= f(X_t^{0,x}) \exp \lbrace \int_{0}^{t} c(X_s^{0,x})ds \rbrace \\
	&+\int_{0}^{t} g(X_s^{0,x})\exp \lbrace \int_{0}^{s} c(X_u^{0,x})du \rbrace ds.
	\end{align*}
	
	The function on the right side of the equality is a  solution of \eqref{(3.3.2)},this finishes the proof.
\end{proof}

The special case is when
$c(x) \equiv g(x) \equiv 0$, which gives us the Transport equation
$$\frac{\partial v^\epsilon(t,x)}{\partial t} = L^\epsilon v^\epsilon(t,x), \ v^\epsilon(0,x)=f(x),$$
the solution of the transport equation can be written in the following form
\begin{equation}
	\label{(3.4.2)}
	v^\epsilon(t,x)= M[f(X_t^{\epsilon,x})] .
	\end{equation}
	
As in the case of Cauchy problem passing to the limit when $\epsilon \to 0$ we get $\lim\limits_{\epsilon \to 0} v^\epsilon(t,x)= v^0(t,x)$ where $v^0(t,x)$ is the solution of 
$$\frac{\partial v^0(t,x)}{\partial t} = L^0 v^0(t,x), \ v^0(0,x)=f(x).$$


\end{document}